\newtheorem{thm}{Theorem}
\newtheorem{lem}[thm]{Lemma}
\theoremstyle{remark}
\newcommand{\FF}{\mathbb{F}}
\newcommand{\cC}{\mathcal{C}}
\DeclareMathOperator{\image}{Im}
\DeclareMathOperator{\Aut}{Aut}
\DeclareMathOperator{\MAut}{MAut}
\DeclareMathOperator{\wt}{wt}
\DeclareMathOperator{\GL}{GL}
\begin{document}

\title{Classification of Self-Dual Codes of Length 36}

\author{
Masaaki Harada\thanks{
Department of Mathematical Sciences,
Yamagata University,
Yamagata 990--8560, Japan, and
PRESTO, Japan Science and Technology Agency (JST), Kawaguchi,
Saitama 332--0012, Japan. 
email: mharada@sci.kj.yamagata-u.ac.jp}
and 
Akihiro Munemasa\thanks{
Graduate School of Information Sciences,
Tohoku University,
Sendai 980--8579, Japan.
email: munemasa@math.is.tohoku.ac.jp}
}

\maketitle

\begin{abstract}
A complete classification of binary self-dual codes of length $36$ is given.
\end{abstract}

{\small
\noindent
{\it Key words and phrases:} self-dual code, weight enumerator, mass formula
\\
\noindent
{\it 2000 Mathematics Subject Classification:}
Primary: 94B05; Secondary: 94B75
}

\section{Introduction}

As described in~\cite{RS-Handbook},
self-dual codes are an important class of linear codes for both
theoretical and practical reasons.
It is a fundamental problem to classify self-dual codes
of modest lengths and 
much work has been done towards classifying self-dual codes over $\FF_q$
for $q=2$ and $3$,
where $\FF_q$ denotes the finite field of order $q$
and $q$ is a prime power
(see~\cite{RS-Handbook}).

Codes over $\FF_2$ are called {\em binary} and
all codes in this paper are binary unless otherwise noted.
The \textit{dual code} $C^{\perp}$ of a code 
$C$ of length $n$ is defined as
$
C^{\perp}=
\{x \in \FF_2^n \mid x \cdot y = 0 \text{ for all } y \in C\},
$
where $x \cdot y$ is the standard inner product.
A code $C$ is called \textit{self-orthogonal} 
if $C \subset C^{\perp}$,
and $C$ is  called \textit{self-dual} if $C = C^{\perp}$. 
A self-dual code $C$ is {\em doubly even} if all
codewords of $C$ have weight divisible by four, and {\em
singly even} if there is at least one codeword of weight $\equiv 2
\pmod 4$.
It is  known that a self-dual code of length $n$ exists 
if and only if  $n$ is even, and
a doubly even self-dual code of length $n$
exists if and only if $n$ is divisible by eight.
Two codes are {\em equivalent} if one can be
obtained from the other by permuting the coordinates.
An {\em automorphism} of $C$ is a permutation of the coordinates of $C$
which preserves $C$.
The set consisting of all automorphisms of $C$ is called the
{\em automorphism group} of $C$ and it is denoted by 
$\Aut(C)$. 

A classification of 
self-dual codes of lengths up to $30$ and
doubly even self-dual codes of length $32$
is known (see~\cite[Table I]{RS-Handbook}).
A classification of singly even
self-dual codes of length $32$ is given in~\cite{BR02}.
The classification is extended to length $34$~\cite{B06}.
Using the classification of self-dual codes of length
$34$ and minimum weight $6$,
extremal self-dual codes of length $36$, 
that is, those with minimum weight $8$, 
were classified in~\cite{MG08}.

The main aim of this paper is to give a complete classification
of self-dual codes of length $36$, confirming in particular,
the partial classification given in~\cite{MG08}.

\begin{thm}\label{thm:36}
There are $519492$ inequivalent self-dual codes of length $36$.
Of these $41$ are extremal, $58671$ have minimum weight $6$,
$436633$ have minimum weight $4$, and 
$24147$ have minimum weight $2$.
\end{thm}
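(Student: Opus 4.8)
The plan is to perform an exhaustive computer search whose completeness is certified by the mass formula. The number of self-dual codes of length $n$, regarded as subspaces of $\FF_2^n$, is $\prod_{i=1}^{n/2-1}(2^i+1)$, so that over a complete set of representatives of the equivalence classes
\[
\sum_{[\cC]}\frac{n!}{|\Aut(\cC)|}=\prod_{i=1}^{n/2-1}(2^i+1).
\]
For $n=36$ the right-hand side is a fixed integer, namely $\prod_{i=1}^{17}(2^i+1)$, and once a list of pairwise inequivalent self-dual codes together with their automorphism group orders has been produced, matching the two sides at once rules out both omissions and duplications. The whole problem therefore reduces to generating a provably complete supply of codes, rejecting isomorphic copies while recording $|\Aut(\cC)|$, and verifying the mass formula.

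First I would dispose of the codes of minimum weight $2$. If $(1,1,0,\dots,0)\in\cC$, self-orthogonality forces $c_1=c_2$ for all $c\in\cC$, and splitting off the first two coordinates writes $\cC=i_2\oplus\cC'$, where $i_2$ is the unique self-dual code of length $2$ and $\cC'$ is self-dual of length $34$. Conversely every such sum has minimum weight $2$; by uniqueness of the decomposition into indecomposable summands the class of $\cC'$ is determined by that of $\cC$, so this is a bijection between the minimum-weight-$2$ codes of length $36$ and the (already classified) self-dual codes of length $34$. This accounts for the $24147$ codes of minimum weight $2$ and gives their automorphism group orders directly.

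For the codes of minimum weight at least $4$ I would generate candidates from the classified self-dual codes of length $34$ by the length-increasing neighbor, or building-up, construction, which is exhaustive: every self-dual code of length $36$ is equivalent to one obtained from a self-dual code of length $34$ by adjoining a suitable pair of coordinates. Since this step lowers the minimum weight by at most $2$, the minimum-weight-$8$ codes arise from length-$34$ codes of minimum weight $6$ (recovering the list of~\cite{MG08}), the minimum-weight-$6$ codes from those of minimum weight $\ge 4$, and the minimum-weight-$4$ codes from the remaining length-$34$ codes. For each seed $\cC'$ one runs over the inequivalent extensions, using the action of $\Aut(\cC')$ to reduce the number of cases, discards anything of minimum weight $\le 2$, and applies a canonical form for isomorph rejection, recording $|\Aut(\cC)|$ for each survivor; neighbors taken within length $36$ can be used to confirm that no code in a given weight class has been missed.

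The conceptual content is modest; the difficulty is one of scale and of certifying a large machine computation. The main obstacle is carrying out reliable isomorph rejection among the roughly half a million codes of minimum weight $\ge 4$ and computing every $|\Aut(\cC)|$ exactly, because a single miscounted automorphism group or a single missed or duplicated code would be detectable only through the global count. The decisive step is thus the final check that $\sum_{[\cC]}36!/|\Aut(\cC)|$ over the complete list equals $\prod_{i=1}^{17}(2^i+1)$: this agreement is what upgrades ``we appear to have found them all'' to a proof, after which the numbers $41$, $58671$, $436633$, and $24147$ are read off by sorting the codes according to minimum weight.
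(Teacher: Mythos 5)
Your proposal is correct and shares the paper's overall architecture: split off the minimum-weight-$2$ codes as $i_2\oplus C_{34}$ using the classification at length $34$~\cite{B06}, build all codes of minimum weight $\ge 4$ from length-$34$ seeds by the two-coordinate extension of Subsection~\ref{Subsec:CM} (pruning the choices of $(a_1,\dots,a_{17})$ by the action of the seed's automorphism group), and certify completeness by a mass formula. The genuine difference is the certificate. You invoke only the classical mass formula $\sum_C 36!/\#\Aut(C)=\prod_{i=1}^{17}(2^i+1)$, applied once to the entire list at the end; the paper instead introduces Thompson's mass formula for weight enumerators (Lemma~\ref{lem:wt}) and its consequence Lemma~\ref{lem:wtC}, which equates a weighted count of weight-$d$ codewords over a putative list with the coefficient of $y^d$ in \eqref{eq:mass}. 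This refined identity certifies each minimum-weight class separately: first that $\cC_{36,2}\cup\cC_{36,4}$ exhausts minimum weight $\le 4$, then that adjoining $\cC_{36,6}$ exhausts minimum weight $\le 6$, all before the extremal codes of~\cite{MG08} enter the picture; the global mass formula then serves as the final check that also confirms the list of~\cite{MG08}. Both certificates are logically sufficient --- either identity simultaneously rules out omissions and duplications, granted correct computation of the orders $\#\Aut(C)$ --- so your proof is valid as it stands. What the per-class version buys is a termination criterion for the generation loop at each minimum weight (one knows exactly when to stop searching for $[36,18,4]$ codes before starting on $[36,18,6]$) and localization of any error to a single weight class, which for a computation involving half a million codes is a meaningful practical gain rather than a logical necessity. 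Your side remark that neighbors within length $36$ could confirm completeness of a weight class is not needed and would require a separate connectivity argument; the mass-formula check you describe already does all the certifying work.
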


Generator matrices of all inequivalent self-dual codes
of length $36$, as well as those of shorter lengths,
can be obtained electronically from~\cite{Data}.
As a summary, we list in Table~\ref{Tab:N}
the total number $\#_T$ of 
inequivalent self-dual codes of length $n$ and
the number $\#_d$ of 
inequivalent self-dual codes of length $n$ and minimum weight $d$
for $n=2,4,\ldots,36$.
All computer calculations in this paper
were done by {\sc Magma}~\cite{Magma}.

\begin{table}[th]
\caption{Numbers of self-dual codes}
\label{Tab:N}
\begin{center}
{\small
\begin{tabular}{c|c|cc||c|r|rrrr}
\noalign{\hrule height0.8pt}
$n$  
& \multicolumn{1}{c|}{$\#_T$} 
& \multicolumn{1}{c}{$\#_2$} 
& \multicolumn{1}{c||}{$\#_4$} 
&
$n$   
& \multicolumn{1}{c|}{$\#_T$} 
& \multicolumn{1}{c}{$\#_2$} 
& \multicolumn{1}{c}{$\#_4$} 
& \multicolumn{1}{c}{$\#_6$} 
& \multicolumn{1}{c}{$\#_8$} \\
\hline
  2& 1& 1& 0  &  20& 16& 9& 7& 0& 0 \\
  4& 1& 1& 0  &  22& 25& 16& 8& 1& 0 \\
  6& 1& 1& 0  &  24& 55& 25& 28& 1& 1 \\
  8& 2& 1& 1  &  26& 103& 55& 47& 1& 0 \\
 10& 2& 2& 0  &  28& 261& 103& 155& 3& 0 \\
 12& 3& 2& 1  &  30& 731& 261& 457& 13& 0 \\
 14& 4& 3& 1  &  32& 3295& 731& 2482& 74& 8 \\
 16& 7& 4& 3  &  34& 24147& 3295 & 19914& 938 & 0 \\
 18& 9& 7& 2  &  36& 519492 & 24147 & 436633 & 58671 & 41\\
\noalign{\hrule height0.8pt}
  \end{tabular}
}
\end{center}
\end{table}

\section{Preliminaries}
\subsection{Classification method}
\label{Subsec:CM}
Here we describe a method for classifying self-dual 
codes.
This method is similar to that given in~\cite{Huffman}.

Suppose that 
$C$ is a self-dual 
$[n,n/2,d]$ code with $d \ge 4$.
Define a subcode of $C$ as follows
\[
C_0=\{(x_1,x_2,\ldots,x_{n}) \in C \mid x_{n-1}=x_{n}\}.
\]
Since $C^\perp$ has no codeword of weight $2$,
$C_0$ has dimension $n/2-1$.
Permuting coordinates if necessary,
we may assume that there is a codeword $x=(x_1,\ldots,x_{n})$
of weight $d$ in $C$ with $x_{n-1}=x_{n}\neq0$.
Then, the following code 
\[
C_1=\{(x_1,x_2,\ldots,x_{n-2}) \mid (x_1,x_2,\ldots,x_{n}) \in C_0\}
\]
is a self-dual $[n-2,n/2-1,d-2]$ code.
Thus, the subcode $C_0$ has generator matrix of the form
\begin{equation}\label{eq:G}
G_0=
\left(\begin{array}{ccccc|cc}
&      &      & & &a_1    &a_1   \\
&      &G_1   & & &\vdots &\vdots\\
&      &      & & &a_{n/2-1} &a_{n/2-1}\\
\end{array}\right),
\end{equation}
where $G_1$ is a generator matrix of $C_1$
and $a_i \in \FF_2$ $(i=1,\ldots,n/2-1)$.
It follows that
every
self-dual $[n,n/2,d]$
code is constructed as
the code $\langle C_0, x \rangle$
for some code $C_0$ with generator
matrix of the form (\ref{eq:G})
and some vector $x \in C_0^\perp \setminus C_0$,
where $\langle C_0, x \rangle$ denotes the code
generated by the
codewords of $C_0$ and $x$.
Note that there is essentially a unique choice for
$\langle C_0, x \rangle$, for a given $C_0$.
Indeed, among the three self-dual codes lying between
$C_0^\perp$ and $C_0$, two of them are equivalent,
while the remaining code has minimum weight $2$.

In this way, 
all self-dual $[n,n/2,d]$ codes $D$,
which must be checked further for equivalence, 
are constructed, by taking generator matrices of all inequivalent 
self-dual $[n-2,n/2-1,d-2]$ codes $D_1$
as matrices $G_1$, and by considering 
$a_i \in \FF_2$ $(i=1,\ldots,n/2-1)$ in (\ref{eq:G}).

As described in~\cite{Huffman},
the number of possibilities for $a_i$ $(i=2,\ldots,n/2-1)$
is decreased 
by applying elements of
$\Aut(D_1)$ to the first $n-2$ coordinates of $D$.
This can be made more precise and more general as follows.
Two codes $C$ and $C'$ over $\FF_q$
are monomially equivalent if there is some monomial
matrix $M$ over $\FF_q$ such that $C' =C M =\{c M \mid c \in C \}$.
The monomial automorphism group of $C$
is the set of monomial matrices $M$ with $C =C M$ and
it is denoted by $\MAut(C)$.
Let $D_1$ be a linear $[n,k]$ code over $\FF_q$ with
$k\times n$ generator matrix $G_1$.
Then there exists a homomorphism $f:\MAut(D_1)\to\GL(k,q)$ defined
by $f(P)G_1=G_1P$, where $P\in\MAut(D_1)$.
The image $\image(f)$ is a subgroup of $\GL(k,q)$.
With this notation, we have the following sufficient condition
for monomial equivalence.

\begin{lem}\label{lem:aut}
Let $m$ be a positive integer, and let $a,b\in\FF_q^k$.
Suppose that $a^T$ and $b^T$ 
belong to the same $\image(f)$-orbit (under the left action),
where $a^T$ denotes the transpose of $a$.
Then the $[n+m,k]$ codes over $\FF_q$ with generator matrices
\[
\begin{pmatrix}
G_1&a^T&\cdots&a^T
\end{pmatrix}
\text{ and }
\begin{pmatrix}
G_1&b^T&\cdots&b^T
\end{pmatrix}
\]
are monomially equivalent.
\end{lem}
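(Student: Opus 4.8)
The plan is to make the hypothesis explicit and then exhibit a single monomial matrix that transforms one generator matrix into the other. Write $G_a=\begin{pmatrix}G_1&a^T&\cdots&a^T\end{pmatrix}$ and $G_b=\begin{pmatrix}G_1&b^T&\cdots&b^T\end{pmatrix}$ for the two $k\times(n+m)$ matrices in question, each carrying $m$ appended columns. Since $a^T$ and $b^T$ lie in the same $\image(f)$-orbit under the left action, there is some $Q\in\image(f)$ with $Qa^T=b^T$; and because $Q$ lies in the image of $f$, I can choose a monomial automorphism $P\in\MAut(D_1)$ with $f(P)=Q$, so that by the defining relation $QG_1=f(P)G_1=G_1P$, where $P$ is an $n\times n$ monomial matrix fixing $D_1$.

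Next I would left-multiply $G_a$ by $Q$ and read off the effect block by block: $QG_a=\begin{pmatrix}QG_1&Qa^T&\cdots&Qa^T\end{pmatrix}=\begin{pmatrix}G_1P&b^T&\cdots&b^T\end{pmatrix}$, using $QG_1=G_1P$ and $Qa^T=b^T$. The only discrepancy from $G_b$ is now the factor $P$ on the first block, which is confined to the first $n$ coordinates. To remove it, define the $(n+m)\times(n+m)$ matrix $N=\begin{pmatrix}P^{-1}&0\\0&I_m\end{pmatrix}$; this is monomial because $P$, hence $P^{-1}$, is monomial and $I_m$ is, and the block structure keeps the two coordinate sets separate. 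A direct multiplication then gives $QG_aN=\begin{pmatrix}G_1PP^{-1}&b^T&\cdots&b^T\end{pmatrix}=G_b$.

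Finally I would translate this matrix identity back into the language of codes. Since $Q\in\GL(k,q)$, left multiplication by $Q$ is a row operation that does not change the row space, so $\langle G_a\rangle=\langle QG_a\rangle$; applying the monomial matrix $N$ to this code yields the code generated by $QG_aN=G_b$, that is, $\langle G_a\rangle N=\langle G_b\rangle$. By definition this exhibits $\langle G_a\rangle$ and $\langle G_b\rangle$ as monomially equivalent, which is the assertion of the lemma.

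The argument is essentially routine linear algebra, so I do not anticipate a serious obstacle; the points requiring care are bookkeeping rather than conceptual. Specifically, I must keep the left action of $\GL(k,q)$ on column vectors consistent with the right action of the monomial group on codewords, track the transposes correctly, and confirm that the block-diagonal $N$ is genuinely \emph{monomial} (not merely invertible). Should the orbit relation instead be presented in the form $a^T=Qb^T$, I would simply replace $Q$ by $Q^{-1}$, which remains in $\image(f)$ since the latter is a subgroup of $\GL(k,q)$.
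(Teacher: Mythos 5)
Your proof is correct and follows essentially the same route as the paper's: both exhibit a block-diagonal monomial matrix $\bigl(\begin{smallmatrix}P&O\\O&I_m\end{smallmatrix}\bigr)$ (with $P$ or $P^{-1}$ in the corner, depending on which direction of the orbit relation one starts from) and use the defining relation $f(P)G_1=G_1P$ together with the fact that left multiplication by an element of $\GL(k,q)$ preserves the row space. The extra bookkeeping you supply is exactly what the paper leaves implicit.
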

\begin{proof}
There exists a monomial matrix $P\in\MAut(D_1)$ such that
$a^T=f(P)b^T$. Then the monomial matrix
\[
\begin{pmatrix} P&O\\O^T&I_m\end{pmatrix}
\]
gives a monomial equivalence of the two codes above,
where $I_m$ denotes the identity matrix of order $m$ and
$O$ denotes the $n \times m$ zero matrix.
\end{proof}

In our case $(n,q)=(36,2)$, we only need to consider
$(a_1,\dots,a_{17}) \in \FF_2^{17}$ in (\ref{eq:G}),
up to the action of $\image(f)$ by Lemma~\ref{lem:aut}.
Orbit representatives for a subgroup of $\GL(17,2)$
can easily be found by {\sc Magma}~\cite{Magma}.

\subsection{Mass formula for weight enumerators}
Now we give a mass formula for weight enumerators
of self-dual codes.

\begin{lem}[Thompson~\cite{T73}]\label{lem:wt}
Let $n$ be an even positive integer.
Let $W_C(y)$ denote the weight enumerator of a code $C$.
Then
\begin{equation}\label{eq:mass}
\sum_{C}W_C(y)=
\Big(\prod_{i=1}^{n/2-1}(2^i+1)\Big)(1+y^n)+
\sum_{j=1}^{n/2-1}
\binom{n}{2j}\prod_{i=1}^{n/2-2}(2^i+1)y^{2j},
\end{equation}
where $C$ runs through
the set of all self-dual codes of length $n$.
\end{lem}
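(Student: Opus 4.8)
The plan is to compare the coefficient of $y^{w}$ on the two sides. Writing $\sum_{C}W_C(y)=\sum_{w=0}^{n}T_w y^{w}$ with $T_w=\sum_C A_w(C)$, where $A_w(C)$ denotes the number of weight-$w$ codewords of $C$ and $C$ runs over all self-dual codes of length $n$, I interpret $T_w$ as the number of incidences between weight-$w$ vectors and the self-dual codes containing them. Counting these incidences by first fixing the vector gives $T_w=\binom{n}{w}M_w$, where $M_w$ is the number of self-dual codes containing a fixed vector of weight $w$. Here I use that the symmetric group acts transitively on the vectors of weight $w$ while permuting the self-dual codes, so that $M_w$ does not depend on the chosen vector. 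Since every codeword of a self-dual code has even weight, $M_w=0$ for odd $w$, which accounts for the absence of odd-degree terms on the right.

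It then remains to evaluate $M_w$ for even $w$. Let $N(m)=\prod_{i=1}^{m/2-1}(2^{i}+1)$ denote the total number of self-dual codes of even length $m$. For $w=0$ the only vector is $0$, contained in every code, so $M_0=N(n)$; for $w=n$ the only vector is the all-ones vector $\mathbf{1}$, and since all codewords have even weight we have $\mathbf{1}\in C^{\perp}=C$ for every self-dual $C$, whence $M_n=N(n)$. These two contributions produce the factor $(1+y^{n})\prod_{i=1}^{n/2-1}(2^i+1)$. The substance of the lemma is the claim that $M_w=N(n-2)=\prod_{i=1}^{n/2-2}(2^{i}+1)$ for all even $w$ with $0<w<n$.

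To prove this, fix $v$ with even weight $w$ and $0<w<n$. A self-dual code $C$ containing $v$ is self-orthogonal, so $C\subseteq v^{\perp}$, and since $v\cdot v=\wt(v)\equiv 0$ is isotropic, such codes $C$ correspond bijectively to the maximal totally isotropic subspaces of the $(n-2)$-dimensional space $v^{\perp}/\langle v\rangle$ carrying the induced nondegenerate symmetric bilinear form. The crux, and the step I expect to be the main obstacle, is to pin down the type of this form over $\FF_2$: the induced form is alternating exactly when $u\cdot u=0$ for every $u\in v^{\perp}$, i.e.\ when $v^{\perp}\subseteq\mathbf{1}^{\perp}$, which holds if and only if $v=\mathbf{1}$. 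Hence for $0<w<n$ the quotient form is nonalternating, and as every nondegenerate nonalternating symmetric bilinear form on $\FF_2^{n-2}$ is equivalent to the standard inner product, its maximal totally isotropic subspaces are precisely the self-dual codes of length $n-2$; this yields $M_w=N(n-2)$. In the excluded case $v=\mathbf{1}$ the form is symplectic on $\FF_2^{n-2}$, whose Lagrangian subspaces number $\prod_{i=1}^{n/2-1}(2^{i}+1)=N(n)$, consistently with $M_n=N(n)$.

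Substituting $M_0=M_n=N(n)$ and $M_{2j}=N(n-2)$ for $1\le j\le n/2-1$ into $\sum_w\binom{n}{w}M_w y^{w}$ reproduces the right-hand side of \eqref{eq:mass} term by term. The only external inputs are the evaluation $N(m)=\prod_{i=1}^{m/2-1}(2^{i}+1)$, which I would obtain by induction from the recursive construction of Section~\ref{Subsec:CM} (or cite as classical), and the standard count $\prod_{i=1}^{k}(2^{i}+1)$ of Lagrangians in a $2k$-dimensional symplectic $\FF_2$-space used above. Throughout, the delicate bookkeeping is to keep the orthogonal (nonalternating) and symplectic (alternating) cases separate, since it is exactly this dichotomy, governed by whether $v$ equals $\mathbf{1}$, that separates the two $N(n)$ terms from the $N(n-2)$ terms.
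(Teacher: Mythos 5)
Your proof is correct. Note, however, that the paper does not prove this lemma at all: it is stated as a quotation of Thompson's 1973 result, so there is no internal argument to compare against. Your argument is the standard (and essentially Thompson's original) one: double-count incidences $(v,C)$ with $v\in C$, use the transitive $S_n$-action on weight-$w$ vectors to reduce to the single quantity $M_w$, and then compute $M_w$ by passing to $v^\perp/\langle v\rangle$. The two delicate points are handled correctly: the induced form on $v^\perp/\langle v\rangle$ is alternating precisely when $\mathbf{1}\in(v^\perp)^\perp=\langle v\rangle$, which cleanly separates the $w=0,n$ terms (count $\prod_{i=1}^{n/2-1}(2^i+1)$) from the $0<w<n$ terms (count $\prod_{i=1}^{n/2-2}(2^i+1)$); and the identification of maximal totally isotropic subspaces of the non-alternating form with self-dual codes of length $n-2$ rests on the classical facts that a nondegenerate non-alternating symmetric bilinear form over $\FF_2$ is equivalent to the standard one and that every totally isotropic subspace extends to one of dimension $(n-2)/2$ (equivalently, every self-orthogonal code of even length extends to a self-dual code). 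Both are standard and could be cited; your consistency check that the symplectic case reproduces $M_n=N(n)$ is a nice confirmation. In short: the proposal supplies a complete, self-contained proof of a result the paper only cites.
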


As a consequence, we have the following:

\begin{lem}\label{lem:wtC}
Let $n$ and $d$ be even positive integers. Let
$\cC$ be a family of
inequivalent self-dual codes of length
$n$ and minimum weight at most $d$. Then $\cC$ is
a complete set of representatives for equivalence classes
of self-dual codes of length
$n$ and minimum weight at most $d$, if and only if
\begin{equation}\label{eq:wtd}
\sum_{C\in\cC}\frac{n!}{\#\Aut(C)}
\#\{x\in C\mid \wt(x)=d\}=
\binom{n}{d}
\prod_{i=1}^{n/2-2}(2^i+1).
\end{equation}
\end{lem}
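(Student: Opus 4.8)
The plan is to read off, from Thompson's mass formula (Lemma~\ref{lem:wt}), the total number of weight-$d$ codewords summed over \emph{all} self-dual codes of length $n$, and then to reorganize that global count by equivalence classes. Writing $A_d(C)=\#\{x\in C\mid\wt(x)=d\}$ for the coefficient of $y^d$ in $W_C(y)$, I would first extract the coefficient of $y^d$ from~(\ref{eq:mass}). For the relevant range $2\le d\le n-2$ (the case $d=n$ is degenerate, contributing only the all-ones vector), this yields
\[
\sum_{C}A_d(C)=\binom{n}{d}\prod_{i=1}^{n/2-2}(2^i+1),
\]
where $C$ runs over all self-dual codes of length $n$. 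This is exactly the right-hand side of~(\ref{eq:wtd}), so the whole lemma reduces to matching this global sum against the weighted sum over $\cC$.

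Next I would pass from individual codes to equivalence classes. The symmetric group $S_n$ acts on the set of self-dual codes of length $n$ by permuting coordinates, the stabilizer of $C$ is precisely $\Aut(C)$, so the equivalence class of $C$ has size $n!/\#\Aut(C)$, and $A_d$ is constant on each class. Since every code of minimum weight $>d$ has $A_d=0$, choosing one representative per class rewrites the displayed identity as
\[
\sum_{[C]}\frac{n!}{\#\Aut(C)}A_d(C)=\binom{n}{d}\prod_{i=1}^{n/2-2}(2^i+1),
\]
where the sum is over all equivalence classes $[C]$ of self-dual codes of length $n$ with minimum weight at most $d$. The forward implication is now immediate: if $\cC$ is a complete set of representatives for these classes, then the left-hand side of~(\ref{eq:wtd}) is literally this sum, hence equals the right-hand side.

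For the converse I would argue by nonnegativity. Any family $\cC$ of inequivalent self-dual codes of minimum weight at most $d$ picks out a subset of the classes appearing above, so the left-hand side of~(\ref{eq:wtd}) is at most the full sum, with equality precisely when every omitted class contributes $0$, that is, satisfies $A_d(C)=0$. The main obstacle is therefore to exclude a class being silently dropped: I need that every self-dual code of length $n$ with minimum weight at most $d$ actually carries a codeword of weight exactly $d$, so that $A_d(C)>0$ on every relevant class and equality forces $\cC$ to omit nothing. This positivity is the delicate point and is \emph{not} automatic for a code whose minimum weight is strictly below $d$, since such a code could conceivably have its nonzero weights skip the value $d$ (for instance a doubly even code when $d\equiv2\pmod4$). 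In the classification it is secured by the inductive setup, in which representatives of all classes of minimum weight below $d$ are already in hand before the formula is applied at level $d$, so that any possibly missing class has minimum weight exactly $d$ and hence $A_d(C)\ge1$. Granting this, equality in~(\ref{eq:wtd}) yields that $\cC$ meets every class, completing the proof.
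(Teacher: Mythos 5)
Your argument follows the same route as the paper's one-line proof: extract the coefficient of $y^d$ from Thompson's formula (\ref{eq:mass}) and convert the sum over all distinct self-dual codes into a sum over equivalence classes weighted by the orbit size $n!/\#\Aut(C)$. The extra care you take with the converse is justified, and the subtlety you flag is real: the ``if'' direction of the lemma as literally stated fails without an additional hypothesis. For instance, with $n=8$ and $d=6$, the family $\cC=\{i_2\oplus i_2\oplus i_2\oplus i_2\}$ satisfies (\ref{eq:wtd}) --- both sides equal $105\cdot 4=\binom{8}{6}\cdot 3\cdot 5=420$ --- yet it omits the doubly even code $e_8$, which has minimum weight $4\le 6$ but no codeword of weight $6$. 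Equality in (\ref{eq:wtd}) only certifies that every omitted class has $\#\{x\in C\mid\wt(x)=d\}=0$; to conclude completeness one needs, exactly as you say, that all classes of minimum weight strictly less than $d$ are already represented in $\cC$, so that any omitted class has minimum weight exactly $d$ and therefore contributes positively. That hypothesis holds in every application in Section~\ref{Sec:36}, where $\cC$ always contains the completed classification at the smaller minimum weights, so your reading of the inductive setup is exactly how the lemma is used. Your proposal is thus correct as a proof of the lemma in the form in which it is actually applied, and it identifies a gap in the literal ``if and only if'' statement that the paper's proof does not address.
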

\begin{proof}
Consider the coefficient of $y^d$ in the formula
(\ref{eq:mass}) in Lemma~\ref{lem:wt}.
\end{proof}

\section{Classification of self-dual codes of length 36}
\label{Sec:36}

In this section, we give a complete classification of 
self-dual codes of length $36$.

Any self-dual code of length $n+2$ and minimum weight $2$
is decomposable as $i_2\oplus C_{n}$,
where $i_2$ is the unique self-dual code of length $2$
and $C_{n}$ is some self-dual code of length $n$.
Since there are $24147$ inequivalent self-dual codes of 
length $34$~\cite{B06},
there are $24147$ inequivalent self-dual $[36,18,2]$ codes.
We denote the set of these $24147$ codes by $\mathcal{C}_{36,2}$.

For each self-dual $[34,17,2]$ code given in~\cite{B06},
the method given in Subsection~\ref{Subsec:CM}
produces a number of self-dual $[36,18,4]$ codes.
We continue the process until we obtain a set $\mathcal{C}_{36,4}$ of 
inequivalent self-dual $[36,18,4]$ codes such that 
$\mathcal{C}=\mathcal{C}_{36,2} \cup\mathcal{C}_{36,4}$
satisfies (\ref{eq:wtd}).
Lemma~\ref{lem:wtC} implies that
there is no other self-dual $[36,18,4]$ code.

Similarly, we found the set $\mathcal{C}_{36,6}$
of the $58671$ inequivalent self-dual $[36,18,6]$ codes 
from the set of inequivalent self-dual $[34,17,4]$ codes.
Setting $\mathcal{C}=\mathcal{C}_{36,2} \cup\mathcal{C}_{36,4}
\cup\mathcal{C}_{36,6}$ in Lemma~\ref{lem:wtC}, 
one can verify that
there is no other self-dual $[36,18,6]$ code.


{From} our results, together with the set of extremal self-dual
codes found by~\cite{MG08},
we obtain the set $\mathcal{C}_{36}$ of $519492$
inequivalent self-dual codes satisfying
\[
\sum_{C \in \mathcal{C}_{36}}
\frac{36!}{\#\Aut(C)} =
\prod_{i=1}^{17}(2^i+1),
\]
%
which is the usual mass formula appearing as the
constant term of (\ref{eq:mass}).
Since this constant term
gives the number of distinct self-dual
codes of length $36$,
it follows that there is no other self-dual code
of length $36$.
Therefore, we have Theorem~\ref{thm:36}.

\section{Some properties}

The weight enumerator of a self-dual code of length $36$
can be written as
\begin{align*}
&
1
+ \alpha y^2
+ (12 \alpha + \beta )y^4
+ (64 \alpha  + 6 \beta  + \gamma)y^6
+ (33  +  196 \alpha + 11 \beta + 64 \delta)y^8
\\ &
+ (3168 + 364 \alpha - 4 \beta - 6 \gamma - 384 \delta)y^{10}
+ (7059 + 364 \alpha - 39 \beta + 832 \delta)y^{12}
\\ &
+ (30336 - 38 \beta + 15 \gamma - 512 \delta)y^{14}
+ (58443 - 572 \alpha+ 27 \beta - 896 \delta)y^{16}
\\ &
+ (64064 - 858 \alpha+ 72 \beta - 20 \gamma + 1792 \delta)y^{18}
+ \cdots + y^{36},
\end{align*}
where $\alpha,\beta,\gamma,\delta$ are integers.
The numbers of distinct weight enumerators of
self-dual codes of length $36$ are listed 
in Table~\ref{Tab:WE} for each minimum weight $d$.
In particular, we list in Table~\ref{Tab:WEd6}
the numbers of self-dual codes with $d=6$
for each weight enumerator, where
the numbers $\#$ of codes and $(\gamma,\delta)$ are listed.

\begin{table}[th]
\caption{Numbers of weight enumerators}
\label{Tab:WE}
\begin{center}
{\small
\begin{tabular}{c|cccc}
\noalign{\hrule height0.8pt}
$d$  & $2$ & $4$ & $6$ & $8$ \\ 
\hline
\#   & 1264 & 2210 &  28 &    2 \\
\noalign{\hrule height0.8pt}
  \end{tabular}
}
\end{center}
\end{table}

\begin{table}[th]
\caption{Numbers of weight enumerators for $d=6$}
\label{Tab:WEd6}
\begin{center}
{\small
\begin{tabular}{rc|rc|rc|rc}
\noalign{\hrule height0.8pt}
\multicolumn{1}{c}{$\#$}  & $(\gamma,\delta)$ &
\multicolumn{1}{c}{$\#$}  & $(\gamma,\delta)$ &
\multicolumn{1}{c}{$\#$}  & $(\gamma,\delta)$ &
\multicolumn{1}{c}{$\#$}  & $(\gamma,\delta)$ \\
\hline
 107&$(2,3)$&   257&$( 8,4)$&  8493&$(16,3)$& 146&$(28,3)$\\
  41&$(2,4)$&  7710&$(10,3)$&     1&$(16,4)$& 122&$(30,3)$\\
 559&$(4,3)$&   183&$(10,4)$&  6432&$(18,3)$&  20&$(32,3)$\\
 111&$(4,4)$&  9739&$(12,3)$&  3773&$(20,3)$&  25&$(34,3)$\\
1971&$(6,3)$&    82&$(12,4)$&  2319&$(22,3)$&   4&$(36,3)$\\
 214&$(6,4)$& 10262&$(14,3)$&   954&$(24,3)$&   5&$(38,3)$\\
4535&$(8,3)$&    22&$(14,4)$&   579&$(26,3)$&   5&$(42,3)$\\
\noalign{\hrule height0.8pt}
  \end{tabular}
}
\end{center}
\end{table}

The smallest order $\#\Aut_s$ and the largest order $\#\Aut_l$
among automorphism groups of self-dual codes of length $36$ 
are listed in Table~\ref{Tab:Aut} for each minimum weight $d$.
%
In particular, for $d=6$, 
the number $N$ of the codes with an automorphism group of 
order $\#\Aut$ is listed in Table~\ref{Tab:Autd6}.
There is no self-dual code with a trivial automorphism group
for lengths up to $32$ (see~\cite{BR02}).
At length $34$, there are $159$ inequivalent self-dual 
$[34,17,6]$ codes with trivial automorphism groups.
Compared to self-dual codes of length $34$,
there are a great number of self-dual 
codes with trivial automorphism groups for length $36$.

\begin{table}[th]
\caption{Orders of the automorphism groups}
\label{Tab:Aut}
\begin{center}
{\small
\begin{tabular}{c|cccc}
\noalign{\hrule height0.8pt}
$d$        & $2$ & $4$ & $6$ & $8$ \\
\hline
$\#\Aut_s$   & $2$ & $4$ & $1$ & $6$ \\
$\#\Aut_l$   &
$2^{18}\cdot18!$& $2^{17}\cdot18!$ & 21504  & 34560  \\
\noalign{\hrule height0.8pt}
  \end{tabular}
}
\end{center}
\end{table}

\begin{table}[th]
\caption{Orders of the automorphism groups for $d=6$}
\label{Tab:Autd6}
\begin{center}
{\footnotesize
\begin{tabular}{cc|cc|cc|cc|cc|cc}
\noalign{\hrule height0.8pt}
$\#\Aut$ & $N$ &
$\#\Aut$ & $N$ &
$\#\Aut$ & $N$ &
$\#\Aut$ & $N$ &
$\#\Aut$ & $N$ &
$\#\Aut$ & $N$ \\
\hline
  1& 41019& 14&   1&  56&   1& 192& 25&  576&  2& 3456&  1\\
  2& 11242& 16& 643&  64& 118& 240&  3&  768& 12& 4608&  1\\
  3&    37& 18&   3&  72&   7& 256& 21&  864&  1& 5376&  1\\
  4&  3368& 20&   2&  80&   1& 288&  7& 1152&  4& 5760&  1\\
  6&   137& 24&  59&  96&  43& 336&  1& 1344&  1&12960&  2\\
  7&     2& 32& 251& 108&   1& 384& 18& 1536&  5&21504&  1\\
  8&  1297& 36&  21& 128&  45& 432&  1& 1728&  3&\\
 12&   166& 48&  78& 144&   9& 512&  8& 2304&  1&\\
\noalign{\hrule height0.8pt}
  \end{tabular}
}
\end{center}
\end{table}

Let $C$ be a singly even self-dual code and
let $C_0$ denote the 
subcode of codewords having weight $\equiv0\pmod4$.
Then $C_0$ is a subcode of codimension $1$.
The {\em shadow} $S$ of $C$ is defined to be 
$C_0^\perp \setminus C$.
Let $d$ and $s$ denote the minimum weights of a
self-dual code of length $36$
and its shadow, respectively.
It was shown in~\cite{BG04} that
$2d+s \le 22$.
The numbers $\#_s$ of self-dual codes with shadows
of minimum weight $s$ are listed in Table~\ref{Tab:S}
for each minimum weight $d$.
Note that there is no self-dual $[36,18,4]$ code
meeting the bound.
A classification of self-dual $[36,18,6]$ codes
meeting the bound can be found in~\cite{MG08}.

\begin{table}[th]
\caption{Minimum weights of the shadows}
\label{Tab:S}
\begin{center}
{\small
\begin{tabular}{c|rrrrr}
\noalign{\hrule height0.8pt}
$d$   
& \multicolumn{1}{c}{$\#_2$} 
& \multicolumn{1}{c}{$\#_6$} 
& \multicolumn{1}{c}{$\#_{10}$} 
& \multicolumn{1}{c}{$\#_{14}$} 
& \multicolumn{1}{c}{$\#_{18}$} \\
\hline
2 &   679 &  22883 & 577 & 7 & 1 \\
4 & 22541 & 414068 &  24 & 0 & - \\
6 &   911 &  57755 &   5 & - & - \\
8 &    16 &     25 &   - & - & - \\

\noalign{\hrule height0.8pt}
  \end{tabular}
}
\end{center}
\end{table}

The covering radius $R(C)$ of a code $C$ is the smallest integer $R$
such that spheres of radius $R$ around codewords of $C$ cover the
space $\FF_2^n$.
The covering radius is a basic and important geometric parameter 
of a code (see~\cite{CKMS}).
Let $C$ be a self-dual $[36,18,d]$ code.
By~\cite[Eq.~(2)]{CKMS} and
the Delsarte bound (see~\cite[Theorem 2]{CKMS}),
\[
6 \le R(C) \le 20-d.
\]
The numbers $\#R_r$ of self-dual codes of length $36$ 
with covering radii $r$
are listed in Table~\ref{Tab:CR} for each minimum weight $d$.
There is a unique self-dual $[36,18,6]$ code with
covering radius $6$.
This code $C_{36}$ has generator matrix $(\ I_{18}\ ,\  M\ )$
where $M$ is listed in Figure~\ref{Fig}.
The code $C_{36}$ has weight enumerator 
with $(\alpha, \beta, \gamma, \delta)=(0, 0, 12, 4)$, 
it has shadow of minimum weight $2$
and it has automorphism group of order $5760$.

\begin{table}[th]
\caption{Covering radii of self-dual codes of length $36$}
\label{Tab:CR}
\begin{center}
{\small
\begin{tabular}{c|rrrrrrr}
\noalign{\hrule height0.8pt}
$d$   & 
\multicolumn{1}{c}{$\#R_{6}$} & 
\multicolumn{1}{c}{$\#R_{7}$} & 
\multicolumn{1}{c}{$\#R_{8}$} & 
\multicolumn{1}{c}{$\#R_{9}$} & 
\multicolumn{1}{c}{$\#R_{10}$} &
\multicolumn{1}{c}{$\#R_{11}$} & 
\multicolumn{1}{c}{$\#R_{12}$} \\
\hline
2& 0 &     23& 20148& 3010& 830& 87& 34 \\
4& 23& 372396& 63599&  587&  28&  0&  0 \\
6& 1 &  53226&  5439&    0&   5&  0&  0 \\
8& 3 &     38&     0&    0&   0&  0&  0 \\
\hline
$d$   & 
\multicolumn{1}{c}{$\#R_{13}$} & 
\multicolumn{1}{c}{$\#R_{14}$} &
\multicolumn{1}{c}{$\#R_{15}$} & 
\multicolumn{1}{c}{$\#R_{16}$} & 
\multicolumn{1}{c}{$\#R_{17}$} & 
\multicolumn{1}{c}{$\#R_{18}$} \\
\hline
2&  5& 7& 1& 1& 0& 1 \\
4&  0& 0& 0& 0& -& - \\
6&  0& 0& -& -& -& - \\
\noalign{\hrule height0.8pt}
  \end{tabular}
}
\end{center}
\end{table}

\begin{figure}[htb]
\centering
{\small
\[
M=
\left(\begin{array}{c}
 0 0 1 1 0 0 0 0 0 0 1 0 1 0 0 0 1 0\\
 0 0 1 1 0 0 0 0 0 0 1 0 1 0 1 1 0 1\\
 0 0 0 1 1 0 0 0 0 0 0 1 1 1 0 1 0 1\\
 0 0 0 1 1 0 0 0 0 0 0 1 0 0 0 1 1 0\\
 0 0 1 0 0 0 0 0 0 0 0 1 1 0 0 1 0 1\\
 0 0 1 0 0 0 0 0 0 0 1 0 0 1 1 0 0 1\\
 1 0 1 1 1 0 0 0 1 0 0 0 1 1 1 1 1 1\\
 1 0 1 1 1 0 1 1 0 1 1 1 1 1 1 1 0 0\\
 1 1 0 0 0 1 0 0 0 1 0 0 1 0 0 1 1 0\\
 1 1 0 0 1 0 1 1 1 0 1 1 0 1 0 1 1 0\\
 0 1 0 0 1 1 0 1 1 0 0 0 1 1 0 0 1 1\\
 0 1 1 1 0 0 0 1 0 1 1 1 0 0 1 1 1 1\\
 0 0 1 0 1 1 1 0 0 1 1 1 0 0 0 0 0 0\\
 1 1 1 0 1 1 1 0 1 0 0 0 0 0 0 0 1 1\\
 0 1 0 0 0 0 1 0 1 1 1 1 1 0 0 1 0 1\\
 1 0 1 1 1 1 0 1 1 1 1 1 0 1 1 0 1 0\\
 1 1 0 1 1 1 1 1 0 1 1 1 1 0 1 0 1 0\\
 0 0 1 0 1 1 1 1 1 0 1 1 1 0 0 1 1 0
\end{array}\right)
\]
\caption{A self-dual $[36,18,6]$ code with covering radius $6$}
\label{Fig}
}
\end{figure}

We end this paper with some remark on the classification of
self-dual codes of length $38$.
Since 
\[
\frac{\prod_{i=1}^{18}(2^i+1)}{38!}>13644432.203,
\]
there are at least $13644433$
inequivalent self-dual codes of
length $38$.

\bigskip
\noindent
{\bf Acknowledgment.} 
This work of the first author was supported by JST PRESTO program.



\end{document}